\numberwithin{equation}{section}
\newcommand{\R}{\mathbb{R}}
\newcommand{\RR}{\mathbb{R}}
\newcommand{\N}{\mathbb{N}}
\newcommand{\Z}{\mathbb{Z}}
\newcommand{\s}{\sharp}
\DeclareMathOperator{\dive}{div}
\newtheorem{lem}{Lemma}
\newtheorem{thm}{Theorem}
\newtheorem{ex}{Example}
\newtheorem{defn}{Definition}
\theoremstyle{remark}
\begin{document}
\title[Periodic fractional equations]{Periodic solutions for nonlocal\\ fractional equations}
\author{Vincenzo Ambrosio}
\address{Dipartimento di Matematica e Applicazioni ``R. Caccioppoli''\\
         Universit\`{a} degli Studi di Napoli Federico II\\
         via Cinthia, 80126 Napoli, Italy}
\email{vincenzo.ambrosio2@unina.it}
\author{Giovanni Molica Bisci}
\address{Dipartimento P.A.U., Universit\`a  degli
Studi Mediterranea di Reggio Calabria, Salita Melissari - Feo di
Vito, 89100 Reggio Calabria, Italy}
\email{gmolica@unirc.it}
\keywords{fractional operators, multiple periodic solutions, critical point result}
\subjclass[2010]{35A15, 35B10, 35R11}

\maketitle
\begin{abstract}
The purpose of this paper is to study the existence of (weak) periodic solutions for
nonlocal fractional equations with periodic boundary conditions.
These equations have a
variational structure and, by applying a critical point result coming out from a
classical Pucci-Serrin theorem in addition to a local minimum result for differentiable
functionals due to Ricceri, we are able to prove the existence of at least two periodic solutions
for the treated problems. As far as we know, all these results are new.
\end{abstract}

\section{Introduction}

\noindent

One of the most celebrated applications of the Mountain Pass
Theorem
consists in the construction
of non-trivial solutions of semilinear equations (see \cite{pucci,pucciradu} and references therein). In \cite{svmountain}, exploiting these Mountain Pass techniques and motivated by the great attention devoted to partial differential equations involving fractional operators (see, for instance, the papers \cite{AP, KMS2, pu2}),
the authors studied the existence of one non-trivial weak solution for the
following nonlocal problem
$$ \left\{
\begin{array}{ll}
(-\Delta)^s u=f(x,u) & {\mbox{ in }} \Omega\\
u=0 & {\mbox{ in }} \RR^N\setminus \Omega,
\end{array} \right.
$$
\noindent where $s\in (0,1)$ is fixed, $N>2s$, $\Omega\subset \RR^N$ is an open bounded set with
continuous boundary, $f:\Omega\times \RR\rightarrow \RR$ is a suitable Carath\'{e}odory function, and $(-\Delta)^s$ is the fractional Laplace operator,
which (up to normalization factors) may be defined as
\begin{equation} \label{2}
-(-\Delta)^s u(x)=
\int_{\RR^n}\frac{u(x+y)+u(x-y)-2u(x)}{|y|^{n+2s}}\,dy,
\,\,\,\,\, (x\in \RR^N).\end{equation}
See also \cite{svlinking} for related topics.\par
\indent In their framework, the weak solutions of problem \eqref{2} are constructed with
a variational method by a minimax procedure on the associated
energy functional.\par
To make the nonlinear methods work, they assume the standard Ambrosetti-Rabinowitz condition
\begin{equation}\label{AR0}
\begin{aligned}
&\mbox{\it there exist}\,\, \mu>2\,\, \mbox{\it and}\,\,r>0\,\, \mbox{\it such that}\,\, \mbox{\it a.e.}\,\, x\in \Omega, t\in \R,\, |t|\geq r\\
& \qquad \qquad \qquad \qquad 0<\mu F(x,t)\le tf(x,t),
\end{aligned}
\end{equation}
where the function $F$ is
the primitive of $f$ with respect to the second variable, that is
\begin{equation*}\label{F}
{\displaystyle F(x,t)=\int_0^t f(x,\tau)d\tau}.
\end{equation*}
More precisely, the classical geometry of the Mountain Pass Theorem
is respected by the nonlocal framework assuming that
\begin{equation}\label{cond0}
{\displaystyle \lim_{|t|\to 0}\frac{f(x,t)}{|t|}=0}\,\, \mbox{\it uniformly in}\,\, x\in \Omega,
\end{equation}
in addition to \eqref{AR0}.\par

Now, the literature on nonlocal operators and on their applications is, therefore,
very interesting and large (see, e.g., \cite{MRS}
for an elementary introduction to this topic and a for a -- still not exhaustive --
list of related references). In this spirit, in the recent papers \cite{A2, A3}, the existence of a non-trivial solution for fractional nonlocal problems
 under periodic conditions has been proved. Also in this context a crucial role is played by \eqref{AR0} and the sublinear behaviour of the nonlinear term $f$ at zero.\par

\indent  Along this direction, in this paper we are concerned with the multiplicity of $T$-periodic (weak) solutions to the following nonlocal problem
\begin{equation}\label{P}
\left\{
\begin{array}{ll}
[(-\Delta+m^{2})^{s}-\gamma]u=\lambda f(x,u) &\mbox{ in } (0,T)^{N}   \\
u(x+Te_{i})=u(x)    &\mbox{ for all } x \in \R^{N}, \quad i=1, \dots, N
\end{array},
\right.
\end{equation}
where $s\in (0,1)$, $N \geq 2$, $m>0$, $0<\gamma<m^{2s}$, $\{e_{i}\}_{i=1}^{N}$ is the canonical basis in $\R^{N}$, $f:\R^N\times \R\rightarrow \R$ is a continuous function, and $\lambda$ is positive real parameter.\par
\indent The main novelty here, respect to the approach considered in \cite{A2, A3, svmountain, svlinking}, is to avoid any condition on the nonlinear term $f$ at zero. We also emphasize that, on the contrary of the classical literature dedicated to periodic boundary value problems involving the Laplace operator or some of its generalizations, up to now, to our knowledge, just a few numbers of papers \cite{Dab, KNV, RS1, RS2} consider periodic nonlocal fractional equations.\par

In order to define the nonlocal operator $(-\Delta+m^{2})^{s}$ we proceed as follows:
let $u\in \mathcal{C}^{\infty}_{T}(\R^{N})$, that is $u$ is infinitely differentiable in $\R^{N}$ and $T$-periodic in each variable. Then $u\in \mathcal{C}^{\infty}_{T}(\R^{N})$ can be represented via Fourier series expansion:
$$
u(x)=\sum_{k\in \Z^{N}} c_{k} \frac{e^{\imath \omega k\cdot x}}{{\sqrt{T^{N}}}} \quad (x\in \R^{N})
$$
where
$$
 \omega:=\frac{2\pi}{T}\mbox{ and } \; c_{k}:=\frac{1}{\sqrt{T^{N}}} \int_{(0,T)^{N}} u(x)e^{- \imath \omega k \cdot x}dx \quad (k\in \Z^{N})
$$
are the Fourier coefficients of the smooth and $T$-periodic function $u$.\par
With the above notations the nonlocal operator $(-\Delta+m^{2})^{s}$ is given by
\begin{equation*}\label{nfrls}
(-\Delta+m^{2})^{s} \,u:=\sum_{k\in \Z^{N}} c_{k} (\omega^{2}|k|^{2}+m^{2})^{s} \, \frac{e^{\imath \omega k\cdot x}}{{\sqrt{T^{N}}}}.
\end{equation*}
\noindent
Furthermore, if $\displaystyle{u:=\sum_{k\in \Z^{N}} c_{k} \frac{e^{\imath \omega k\cdot x}}{{\sqrt{T^{N}}}}}$ and $\displaystyle{v:=\sum_{k\in \Z^{N}} d_{k} \frac{e^{\imath \omega k\cdot x}}{{\sqrt{T^{N}}}}}$, we have that the quadratic form
$$
\mathcal{Q}(u,v):=\sum_{k\in \Z^{N}} (\omega^{2}|k|^{2}+m^{2})^{s} c_{k} \bar{d}_{k}
$$
can be extended by density on the Hilbert space
$$
\mathbb{H}^{s}_{T}:=\left\{u=\sum_{k\in \Z^{N}} c_{k} \frac{e^{\imath \omega k\cdot x}}{{\sqrt{T^{N}}}}\in L^{2}(0,T)^{N}: \sum_{k\in \Z^{N}} (\omega^{2}|k|^{2}+m^{2})^{s} \, |c_{k}|^{2}<+\infty \right\}
$$
endowed with the norm
$$
|u|_{\mathbb{H}^{s}_{T}}:=\left(\sum_{k\in \Z^{N}} (\omega^{2}|k|^{2}+m^{2})^{s} |c_{k}|^{2}\right)^{1/2}.
$$

\indent
Let us recall that in $\R^{N}$, the operator $(-\Delta+m^{2})^{s}$ is strictly related to the quantum mechanics; in fact, when $s= {1}/{2}$, $(-\Delta+m^{2})^{{1}/{2}}-m$ corresponds to the Hamiltonian of a free relativistic particle of mass $m$; see \cite{LL}.
There is also a deep connection between $(-\Delta+m^{2})^{s}-m^{2s}$ and the Stochastic Processes theory; the operator in question is an infinitesimal generator of a L\'{e}vy process $\{X^{m}_{t}\}_{t \geq 0}$ called the relativistic $2s$-stable process
\begin{equation*}\label{carfun}
\mathbb{E}(e^{i \xi \cdot X^{m}_{t}}) := e^{-t [(m^{2}+|\xi|^{2})^{s}-m^{2s}]}  \quad (\xi \in \R^{N});
\end{equation*}
for details we refer to \cite{CMS} and \cite{ryznar}.
\medskip

Finally, for our purpose, we assume that the right-hand side of
equation~\eqref{P} is a continuous function
$f:\R^{N+1}\rightarrow \R$ verifying the following hypotheses:
\smallskip
\begin{compactenum}[($f_1$)]
\item $f(x,t)$ \textit{is }$T$-\textit{periodic in} $x \in \R^{N}$, \textit{that is} $f(x+Te_{i},t)=f(x,t)$ \textit{for any} $x\in \R^{N}$, $T\in \R$, \textit{and }$i=1, \dots, N$;
\item \textit{there exist} $a_{1}, a_{2}>0$, and $\displaystyle{2<q<2^{\s}_{s}:=\frac{2N}{N-2s}}$ \textit{such that}
$$
|f(x,t)|\leq a_{1}+a_{2}|t|^{q-1}
$$
\textit{for any} $x\in \R^{N}$ \textit{and }$t\in \R$;
\item \textit{there exist} $\alpha>2$ and $r_{0}>0$ \textit{such that}
$$
0<\alpha F(x,t)\leq t f(x,t)
$$
\textit{for} $x\in \R^{N}$ \textit{and} $|t|\geq r_{0}$, \textit{where}
 $\displaystyle{F(x,t):=\int_{0}^{t} f(x,\tau) d\tau}$.
\end{compactenum}

\indent It is worth remembering that in some recent papers the authors studied fractional boundary value problems, relaxing hypothesis (\ref{cond0}) and trying to relax or to drop condition $(f_2)$; see, among others, the manuscripts  \cite{BMB0,BMB,BMSNonlinearity,molicaservadei} and the references therein, as well as \cite{CM,MS,SZ} for some nice results obtained in the classical setting.\par
 In this spirit, the main result of the present paper is a multiplicity theorem as stated here below.

\begin{thm}\label{thm1}
Let $m>0$ and $0<\gamma<m^{2s}$. Let $f:\R^{N+1} \rightarrow \R$ be a continuous function satisfying the assumptions $(f_1)$--$(f_3)$.\\
 \indent Then, for any $\varrho>0$ and each
\begin{align}\label{la}
0<\lambda<\frac{\displaystyle{q \sqrt{\varrho}\left(1-\frac{\gamma}{m^{2s}}\right)^{q/2}}}{\displaystyle{2\kappa_{s}\left(a_{1}\sigma_{1}q\left(1-\frac{\gamma}{m^{2s}}\right)^{\frac{q-1}{2}}+a_{2}\sigma_{q}^{q}\varrho^{\frac{q-1}{2}}\right)}},
\end{align}
problem \eqref{P} admits at least two weak solutions in $\mathbb{H}^{s}_{T}$, one of which lies in
$$
\mathbb{S}_{\varrho}:=\left\{u\in \mathbb{H}^{s}_{T}: |u|_{\mathbb{H}^{s}_{T}}<\sqrt{\frac{\varrho}{\kappa_{s} \left(1-\frac{\gamma}{m^{2s}}\right)}}\right\},
$$
where
\begin{align}\label{la2}
\displaystyle{\kappa_{s}:= 2^{1-2s} \frac{\Gamma(1-s)}{\Gamma(s)}},
\end{align}
and
$$
\sigma_{r}:=\frac{1}{\sqrt{\kappa_{s}}}\sup_{u \in \mathbb{H}^{s}_{T}\setminus \{0\}} \frac{|u|_{L^{r}(0, T)^{N}}}{|u|_{\mathbb{H}^{s}_{T}}} \mbox{ with } r\in \{1, q\}.
$$
\end{thm}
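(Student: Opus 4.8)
The plan is to set up the variational framework: weak solutions of \eqref{P} are precisely the critical points of the energy functional
\[
\mathcal{E}_\lambda(u):=\frac{1}{2}\left(|u|_{\mathbb{H}^s_T}^2-\gamma|u|_{L^2(0,T)^N}^2\right)-\lambda\int_{(0,T)^N}F(x,u)\,dx,\qquad u\in\mathbb{H}^s_T.
\]
Under $(f_2)$ and the fractional Sobolev embedding $\mathbb{H}^s_T\hookrightarrow L^r(0,T)^N$ for $1\le r\le 2^\sharp_s$ (which is what the constants $\sigma_r$ encode), $\mathcal{E}_\lambda$ is well-defined and of class $C^1$. Because $0<\gamma<m^{2s}$, the quadratic part is equivalent to the norm: one checks $(\omega^2|k|^2+m^2)^s-\gamma\ge (1-\gamma m^{-2s})(\omega^2|k|^2+m^2)^s$, so $|u|_{\mathbb{H}^s_T}^2-\gamma|u|_{L^2}^2\ge(1-\gamma m^{-2s})|u|_{\mathbb{H}^s_T}^2$. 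I expect that the factor $\kappa_s$ in \eqref{la2} enters because the paper normalizes the embedding constants and the realization of the norm through that Gamma-function constant; I would simply quote from the earlier sections the precise form of the embedding inequality $|u|_{L^r(0,T)^N}\le \sqrt{\kappa_s}\,\sigma_r\,|u|_{\mathbb{H}^s_T}$.

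Next I would obtain the first solution as a local minimum via Ricceri's variational principle. Split $\mathcal{E}_\lambda=\Phi-\lambda\Psi$ with $\Phi(u)=\frac12(|u|_{\mathbb{H}^s_T}^2-\gamma|u|_{L^2}^2)$ coercive and sequentially weakly lower semicontinuous, and $\Psi(u)=\int F(x,u)\,dx$ sequentially weakly continuous (from the compact embedding into $L^q$ and $L^1$). For fixed $\varrho>0$, consider the sublevel set $\Phi^{-1}((-\infty,\varrho))$, which by the coercivity bound contains exactly the ball $\mathbb{S}_\varrho$. The key estimate is to bound $\sup_{\Phi(u)<\varrho}\Psi(u)$ using $(f_2)$: $F(x,t)\le a_1|t|+\tfrac{a_2}{q}|t|^q$, hence $\Psi(u)\le a_1|u|_{L^1}+\tfrac{a_2}{q}|u|_{L^q}^q\le a_1\sqrt{\kappa_s}\,\sigma_1|u|_{\mathbb{H}^s_T}+\tfrac{a_2}{q}(\sqrt{\kappa_s}\,\sigma_q)^q|u|_{\mathbb{H}^s_T}^q$, and then substitute $|u|_{\mathbb{H}^s_T}^2<\varrho/(\kappa_s(1-\gamma m^{-2s}))$. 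Tracking constants carefully yields exactly the quantity in the denominator of \eqref{la}, so that for $\lambda$ in the stated range one has $\lambda\sup_{\Phi(u)<\varrho}\Psi(u)<\varrho$, which is the hypothesis of Ricceri's theorem; it produces a local minimum $u_1\in\mathbb{S}_\varrho$ of $\mathcal{E}_\lambda$, hence a weak solution. (One also notes $u_1\ne 0$ or at least that $\mathcal{E}_\lambda(u_1)<0$ is not needed here; $u_1$ is simply the first solution.)

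For the second solution I would apply the Pucci--Serrin mountain-pass-type result quoted in the introduction. Since $u_1$ is a local minimum of the $C^1$ functional $\mathcal{E}_\lambda$, either $u_1$ is a strict local minimum (giving mountain-pass geometry around it) or there are infinitely many critical points nearby; in either case one gets a second critical point provided $\mathcal{E}_\lambda$ satisfies the Palais--Smale condition and is unbounded below along some ray. Unboundedness below follows from $(f_3)$: the Ambrosetti--Rabinowitz condition gives $F(x,t)\ge c|t|^\alpha-c'$ for $|t|$ large with $\alpha>2$, so $\mathcal{E}_\lambda(tu)\to-\infty$ as $t\to+\infty$ for any fixed $u\not\equiv 0$. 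The Palais--Smale condition is the standard argument: from $(f_3)$, for a PS sequence $(u_n)$, the combination $\mathcal{E}_\lambda(u_n)-\tfrac1\alpha\langle\mathcal{E}_\lambda'(u_n),u_n\rangle$ controls $(\tfrac12-\tfrac1\alpha)(1-\gamma m^{-2s})|u_n|_{\mathbb{H}^s_T}^2$ up to lower-order terms, giving boundedness; then the compact embedding into $L^q$ upgrades weak convergence to strong via the $(S)_+$-type structure of the linear part. Combining, the Pucci--Serrin theorem yields a second weak solution $u_2\ne u_1$.

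The main obstacle I anticipate is purely bookkeeping: pinning down the exact form of the embedding inequality with the constant $\sqrt{\kappa_s}\,\sigma_r$ (i.e. how $\kappa_s$ threads through the definition of the $\mathbb{H}^s_T$-norm and the Sobolev constant) and then propagating every constant through the estimate of $\sup_{\Phi(u)<\varrho}\Psi(u)$ so that the threshold for $\lambda$ matches \eqref{la} exactly, including the powers $(1-\gamma m^{-2s})^{q/2}$ and $(1-\gamma m^{-2s})^{(q-1)/2}$. The conceptual steps (Ricceri for the first solution, Pucci--Serrin plus Ambrosetti--Rabinowitz for the second) are standard once the functional-analytic setting from the earlier sections is in hand.
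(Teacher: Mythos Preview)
Your overall strategy is correct and matches the paper's: apply a Ricceri-type local-minimum principle together with the Pucci--Serrin alternative, verify the Palais--Smale condition via the Ambrosetti--Rabinowitz combination $\mathcal{E}_\lambda(u_n)-\tfrac{1}{\alpha}\langle\mathcal{E}_\lambda'(u_n),u_n\rangle$, and rule out a global minimum by showing unboundedness from below along a ray using $(f_3)$. The paper actually packages the two abstract results into a single statement (its Theorem~\ref{PucciSerrinRicceri}) rather than invoking them sequentially, but this is cosmetic.

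The one genuine structural difference is that the paper does \emph{not} work directly on $\mathbb{H}^s_T$. It passes to the Caffarelli--Silvestre/Stinga--Torrea type extension and carries out the entire argument on the cylinder space $\mathbb{X}^s_T$, with the functional
\[
\mathcal{J}_\lambda(v)=\frac{1}{2\lambda}\bigl(\|v\|_{\mathbb{X}^s_T}^2-\gamma\kappa_s|\textup{Tr}(v)|_{L^2}^2\bigr)-\kappa_s\int_{\partial^0\mathcal{S}_T}F(x,\textup{Tr}(v))\,dx,
\]
and $\Phi(v)=\|v\|_{\mathbb{X}^s_T}^2-\gamma\kappa_s|\textup{Tr}(v)|_{L^2}^2$, $\mu=1/(2\lambda)$. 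This answers your uncertainty about $\kappa_s$: it enters through the trace identity $\|v\|_{\mathbb{X}^s_T}^2=\kappa_s|u|_{\mathbb{H}^s_T}^2$ for the harmonic extension (Theorem~\ref{tracethm}), not through any renormalization of the Sobolev constants. The embedding and compactness results the paper uses (Theorems~\ref{tracethm} and~\ref{compacttracethm}) are stated for $\textup{Tr}(\mathbb{X}^s_T)$, so the extension route lets one quote them verbatim; your direct route on $\mathbb{H}^s_T$ is equally valid since $\textup{Tr}$ is surjective, but you would have to rephrase those results accordingly. One small slip: the sublevel set $\Phi^{-1}((-\infty,\varrho))$ is \emph{contained in} the ball $\mathbb{S}_\varrho$ (via the coercivity inequality), not equal to it; this inclusion is all that is needed to place the local minimizer in $\mathbb{S}_\varrho$.
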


\indent
As an application of Theorem \ref{thm1} we give the following example.
\begin{ex}\rm{
Let $N=2$, $s\in (1/2,1)$, and $f(x, t):=1+t^3$. Then, $f$ verifies $(f_1)$, $(f_2)$ with $q=4$, $a_1=a_2=1$, and $(f_3)$ with $\alpha=3$ and $r_{0}=2$. Clearly $f$ does not satisfy (\ref{cond0}).
By applying Theorem \ref{thm1}, we deduce that the following fractional nonlocal problem
\begin{equation*}
\left\{
\begin{array}{ll}
[(-\Delta+m^{2})^{s}-\gamma]u=\lambda(1+u^3) &\mbox{ in } (0,T)^{2}   \\
u(x+Te_{i})=u(x)    &\mbox{ for all }  x\in \R^{2}, i=1, 2
\end{array},
\right.
\end{equation*}
admits at least two distinct and (non-trivial) $T$-periodic weak solutions, whenever $\lambda \in \Lambda$,
where $$\Lambda:=\left(0, \displaystyle\frac{2}{\kappa_s}\left(1-\frac{\gamma}{m^{2s}}\right)^{2}\max _{\varrho>0} h(\varrho)\right)$$ and $h: [0, +\infty) \rightarrow [0, +\infty)$ is the continuous function given by
$$
h(\varrho):=\frac{\displaystyle{\sqrt{\varrho}}}{\displaystyle{4\sigma_{1}\left(1-\frac{\gamma}{m^{2s}}\right)^{\frac{3}{2}}+\sigma_{4}^{4} \varrho^{\frac{3}{2}}}},
$$
for every $\varrho>0$.}
\end{ex}

\indent
Let us point out that on the contrary of \cite{A2, A3} and \cite{svmountain}, in this paper we assume that $f$ satisfies the Ambrosetti-Rabinowitz condition without requiring the usual additional asymptotic assumption (\ref{cond0}) at zero. Then, Theorem \ref{thm1} improves the existence results obtained in \cite{A2, A3} and it can be viewed as a subelliptic counterpart of \cite[Theorem 4]{R0}; see also \cite{MF} for related topics.\par

\indent
The existence result above stated is a consequence of two critical points theorems (a variant of the Mountain
Pass Theorem due to Pucci and Serrin (see \cite{puse}) and a local minimum result for differentiable functionals (see \cite{R2}) due to Ricceri) to
elliptic partial differential equations (see Theorem \ref{PucciSerrinRicceri} below).\par
 More precisely, in the present paper we prove that the geometry
of these classical minimax theorems is respected by the nonlocal framework: for this we
develop a functional analytical setting in order to correctly encode the periodic boundary datum in the variational
formulation.\par
 Of course, also the compactness property required by these minimax
theorems is satisfied in the nonlocal setting, again thanks to the choice of the
functional setting we work in (see Lemmas \ref{lemma1} and \ref{lemma2}).

\indent
Hence, in order to prove our result, we recall \cite{A2, A3} that the nonlocal operator $(-\Delta+m^{2})^{s}$, exactly as for the fractional Laplacian, can be related to a Dirichlet to Neumann operator (see also \cite{CafSil, CDDS, FF, ST,SV} for more closely related models).\par
Thus, instead of (\ref{P}), we investigate the following problem
\begin{equation}\label{R}
\left\{
\begin{array}{ll}
-\dive(y^{1-2s} \nabla v)+m^{2}y^{1-2s}v =0 &\mbox{ in }\mathcal{S}_{T}:=(0,T)^{N} \times (0,\infty)  \\
\smallskip
v_{| {\{x_{i}=0\}}}= v_{| {\{x_{i}=T\}}} & \mbox{ on } \partial_{L}\mathcal{S}_{T}:=\partial (0,T)^{N} \times [0,\infty) \\
\smallskip
{\partial_{\nu}^{1-2s} v}=\kappa_{s} [\gamma v+\lambda f(x,v)]   &\mbox{ on }\partial^{0}\mathcal{S}_{T}:=(0,T)^{N} \times \{0\}
\end{array}
\right.
\end{equation}
where
$$
{\partial_{\nu}^{1-2s} v}:=-\lim_{y \rightarrow 0^{+}} y^{1-2s} \frac{\partial v}{\partial y}(x,y)
$$
is the conormal exterior derivative of $v$.\par
\indent Taking into account variational structure of $(\ref{R})$, its solutions are obtained as critical points of the energy functional $\mathcal{J}_{\lambda}$ given by
 $$
 \mathcal{J}_{\lambda}(v):=\frac{1}{2\lambda}\left( \|v\|_{\mathbb{X}_{T}^{s}}^{2}-\gamma \kappa_{s} |\textup{Tr}(v)|_{L^{2}(0,T)^{N}}^{2}\right) -\kappa_{s}\int_{\partial^{0}\mathcal{S}_{T}} F(x,\textup{Tr}(v)) \,dx,
 $$
defined on the space $\mathbb{X}_{T}^{s}$, which is the closure of the set of smooth and $T$-periodic (in $x$) functions in $\R^{N+1}_{+}$ with respect to the norm
$$
\|v\|_{\mathbb{X}_{T}^{s}}:=\left(\iint_{\mathcal{S}_{T}} y^{1-2s} (|\nabla v|^{2}+m^{2s} v^{2}) dx dy\right)^{1/2},
$$
and $\textup{Tr}(v):=v(\cdot,0)$.\par
\indent We will prove that $\mathcal{J}_{\lambda}$ satisfies the assumptions of an abstract variational principle \cite{R2}
which allow us to deduce that (\ref{P}) has at least two weak solutions in $\mathbb{H}^{s}_{T}$ one of which lies in a ball.


The paper is organized as follows: in Section 2 we recollect some properties of the
about involved functional spaces, and establish a trace inequality corresponding
to this operator; the nonlocal fractional operator $(-\Delta+m^{2})^{s}$ is considered in
Section 2, by means of the use of the harmonic extension; this includes studying
an associated linear equation in the local version. The main section, Section 3, contains
the results related to the nonlocal nonlinear problem (\ref{P}), where we prove
the main result.

We refer to the recent book \cite{MRS}, as well as \cite{DPV}, for the abstract variational setting used along the present paper.

\section{Some Preliminaries}
\indent
In this section we collect preliminary tools related to (\ref{P}) and the reformulated version (\ref{R}), as well as some basic propositions on trace inequality and embedding results firstly proved in \cite{A2} and \cite{A3}. The reader familiar with this topic may skip this section and go directly to the next one. From now on, we consider $s\in (0, 1)$ and we assume $N\geq 2$. Set
$$
\R^{N+1}_{+}=\{(x,y)\in \R^{N+1}: x\in \R^{N}, y>0 \}
$$
be the upper half-space in $\R^{N+1}$.

\indent
We denote by $\mathcal{S}_{T}:=(0,T)^{N}\times(0,\infty)$ the half-cylinder in $\R^{N+1}_{+}$ with basis $\partial^{0}\mathcal{S}_{T}:=(0,T)^{N}\times \{0\}$
and lateral boundary $\partial_{L}\mathcal{S}_{T}:=\partial (0,T)^{N}\times [0,+\infty)$. Further, with $\|v\|_{L^{r}(\mathcal{S}_{T})}$ we always denote the norm of $v\in L^{r}(\mathcal{S}_{T})$ and with $|u|_{L^{r}(0,T)^{N}}$ the $L^{r}(0,T)^{N}$ norm of $u \in L^{r}(0,T)^{N}$.\par
\indent Let $\mathcal{C}^{\infty}_{T}(\R^{N})$ be the space of functions
$u\in \mathcal{C}^{\infty}(\R^{N})$ such that $u$ is $T$-periodic in each variable, that is
$$
u(x+Te_{i})=u(x) \mbox{ for all } x\in \R^{N}, i=1, \dots, N.
$$
 \indent We define the fractional Sobolev space $\mathbb{H}^{s}_{T}$ as the closure of
$\mathcal{C}^{\infty}_{T}(\R^{N})$ endowed by the norm
\begin{equation*}\label{h12norm}
|u|_{\mathbb{H}^{s}_{T}}:=\sqrt{ \sum_{k\in \Z^{N}} (\omega^{2}|k|^{2}+m^{2})^{s} \, |c_{k}|^{2}}.
\end{equation*}
Finally we introduce the functional space $\mathbb{X}^{s}_{T}$ defined as the completion of
\begin{align*}
\mathcal{C}_{T}^{\infty}(\overline{\R^{N+1}_{+}}):=\Bigl\{&v\in \mathcal{C}^{\infty}(\overline{\R^{N+1}_{+}}): v(x+Te_{i},y)=v(x,y) \\
&\mbox{ for every } (x,y)\in \overline{\R_{+}^{N+1}}, i=1, \dots, N \Bigr\}
\end{align*}
under the $H^{1}(\mathcal{S}_{T},y^{1-2s})$ norm
\begin{equation*}
\|v\|_{\mathbb{X}^{s}_{T}}:=\sqrt{\iint_{\mathcal{S}_{T}} y^{1-2s} (|\nabla v|^{2}+m^{2}v^{2}) \, dxdy} \,.
\end{equation*}

In our framework, a fundamental r\^{o}le between the spaces $\mathbb{X}^{s}_{T}$ and $\mathbb{H}_{T}^{s}$ is played by trace operator $\textup{Tr}$. More precisely, one has the following result (see \cite{A2, A3} for details).
\begin{thm}\label{tracethm}
There exists a surjective linear operator $\textup{Tr} : \mathbb{X}^{s}_{T} \rightarrow \mathbb{H}_{T}^{s}$  such that:
\begin{itemize}
\item[$(i)$] $\textup{Tr}(v)=v|_{\partial^{0} \mathcal{S}_{T}}$ for all $v\in \mathcal{C}_{T}^{\infty}(\overline{\R^{N+1}_{+}}) \cap \mathbb{X}^{s}_{T}$;
\item[$(ii)$] $\textup{Tr}$ is bounded and
\begin{equation}\label{tracein}
\sqrt{\kappa_{s}} |\textup{Tr}(v)|_{\mathbb{H}^{s}_{T}}\leq \|v\|_{\mathbb{X}^{s}_{T}},
\end{equation}
for every $v\in \mathbb{X}^{s}_{T}.$
In particular, equality holds in \eqref{tracein} for some $v\in \mathbb{X}^{s}_{T}$ if and only if $v$ weakly solves the following equation
$$
-\dive(y^{1-2s} \nabla v)+m^{2}y^{1-2s}v =0 \, \mbox{ in } \, \mathcal{S}_{T}.
$$
\end{itemize}
\end{thm}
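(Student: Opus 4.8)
The plan is to diagonalize everything by Fourier series in the $x$ variable, following the approach of \cite{A2, A3}. Writing $v \in \mathcal{C}^\infty_T(\overline{\R^{N+1}_+})$ as $v(x,y) = \sum_{k\in\Z^N} v_k(y)\, e^{\imath\omega k\cdot x}/\sqrt{T^N}$, the weighted Dirichlet energy decouples across frequencies,
$$
\|v\|_{\mathbb{X}^s_T}^2 = \sum_{k\in\Z^N}\int_0^\infty y^{1-2s}\Bigl(|v_k'(y)|^2 + (\omega^2|k|^2+m^2)|v_k(y)|^2\Bigr)\,dy,
$$
while $\textup{Tr}(v) = \sum_k c_k\, e^{\imath\omega k\cdot x}/\sqrt{T^N}$ with $c_k := v_k(0)$ and $|\textup{Tr}(v)|_{\mathbb{H}^s_T}^2 = \sum_k(\omega^2|k|^2+m^2)^s|c_k|^2$. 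Thus the whole statement reduces to a one-dimensional weighted estimate, one per frequency $\mu_k := \sqrt{\omega^2|k|^2+m^2}>0$.

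For each $\mu>0$ consider the degenerate ODE $-(y^{1-2s}\phi')' + \mu^2 y^{1-2s}\phi = 0$ on $(0,\infty)$; by the rescaling $\phi(y)=\theta(\mu y)$ it reduces to $\mu=1$, whose unique bounded solution with $\theta(0)=1$ is the Bessel profile $\theta(y) = \frac{2^{1-s}}{\Gamma(s)}\,y^s K_s(y)$, $K_s$ the modified Bessel function of the second kind. From the expansion of $K_s$ near the origin one extracts
$$
-\lim_{y\to 0^+} y^{1-2s}\theta'(y) = \kappa_s = 2^{1-2s}\frac{\Gamma(1-s)}{\Gamma(s)},
$$
hence $-\lim_{y\to0^+} y^{1-2s}\phi_\mu'(y) = \kappa_s\mu^{2s}$ for the rescaled solution $\phi_\mu$. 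An integration by parts on $(0,\infty)$, legitimate since $y^{1-2s}\phi_\mu'\phi_\mu\to 0$ at infinity by the exponential decay of $K_s$, then yields the exact identity $\int_0^\infty y^{1-2s}(|\phi_\mu'|^2+\mu^2\phi_\mu^2)\,dy = \kappa_s\mu^{2s}$. Consequently the \emph{minimal} (energy-minimizing) extension of a single mode $c_k$, namely $c_k\phi_{\mu_k}(y)$, has weighted energy exactly $\kappa_s\mu_k^{2s}|c_k|^2$.

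Now, given $u=\sum_k c_k\, e^{\imath\omega k\cdot x}/\sqrt{T^N}\in\mathbb{H}^s_T$, define its harmonic extension $\mathcal{E}(u)(x,y):=\sum_k c_k\phi_{\mu_k}(y)\, e^{\imath\omega k\cdot x}/\sqrt{T^N}$; by the previous step $\|\mathcal{E}(u)\|_{\mathbb{X}^s_T}^2 = \kappa_s\sum_k\mu_k^{2s}|c_k|^2 = \kappa_s|u|_{\mathbb{H}^s_T}^2<\infty$, so $\mathcal{E}(u)\in\mathbb{X}^s_T$ and $\textup{Tr}(\mathcal{E}(u))=u$, which already gives surjectivity and exhibits the equality case. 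For a general $v\in\mathbb{X}^s_T$ write $v = w + z$ with $w:=\mathcal{E}(\textup{Tr}(v))$ and $z:=v-w$, so $\textup{Tr}(z)=0$. Testing the weak form of $-\dive(y^{1-2s}\nabla w)+m^2 y^{1-2s}w = 0$ against $z$ — no boundary term arises, since $z$ has vanishing trace and periodicity kills the lateral contribution — gives $\iint_{\mathcal{S}_T} y^{1-2s}(\nabla w\cdot\nabla z + m^2 wz) = 0$. Hence $\|v\|_{\mathbb{X}^s_T}^2 = \|w\|_{\mathbb{X}^s_T}^2 + \|z\|_{\mathbb{X}^s_T}^2 \geq \kappa_s|\textup{Tr}(v)|_{\mathbb{H}^s_T}^2$, with equality if and only if $z=0$, i.e. if and only if $v$ weakly solves the linear equation. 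Linearity and property $(i)$ are immediate on $\mathcal{C}^\infty_T(\overline{\R^{N+1}_+})$, and the trace inequality just established on this dense subspace extends $\textup{Tr}$ to a bounded linear operator on all of $\mathbb{X}^s_T$.

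The main obstacle is the one-dimensional analysis: solving the degenerate ODE, identifying its bounded solution with the Bessel profile, and reading off the constant $\kappa_s$ from the singular behaviour of $K_s$ at the origin, together with the bookkeeping required to pass from finite Fourier sums to the full series in $\mathbb{X}^s_T$ and to interchange the limit $y\to0^+$ with the summation when identifying $\textup{Tr}$. (One could instead obtain $\int_0^\infty y^{1-2s}(|\phi_\mu'|^2+\mu^2\phi_\mu^2)\,dy = \kappa_s\mu^{2s}$ by invoking the known Caffarelli--Silvestre-type characterization of $(-\Delta+m^2)^s$ on $\R^N$, but this merely relocates the same computation.)
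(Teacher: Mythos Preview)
The paper does not give its own proof of this theorem; it states the result and refers to \cite{A2, A3} for the details. Your argument is precisely the approach of those references: Fourier diagonalization in $x$, reduction to the one-dimensional weighted ODE, identification of the minimal-energy profile with the Bessel function $\theta(y)=\frac{2}{\Gamma(s)}(y/2)^{s}K_{s}(y)$, and the orthogonal decomposition $v=\mathcal{E}(\textup{Tr}(v))+z$ with $\textup{Tr}(z)=0$. The paper itself records exactly this structure in the subsequent extension theorem, where the same profile $\theta$ and the identity $\|v\|_{\mathbb{X}^{s}_{T}}=\sqrt{\kappa_{s}}\,|u|_{\mathbb{H}^{s}_{T}}$ for the harmonic extension appear explicitly. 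Your proposal is correct and coincides with the intended argument.
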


\indent The next embedding results will be crucial in the sequel.
\begin{thm}\label{compacttracethm}
Let $N> 2s$. Then  $\textup{Tr}(\mathbb{X}^{s}_{T})$ is continuously embedded in $L^{q}(0,T)^{N}$ for any  $1\leq q \leq 2^{\s}_{s}$.  Moreover,  $\textup{Tr}(\mathbb{X}^{s}_{T})$ is compactly embedded in $L^{q}(0,T)^{N}$  for any  $1\leq q < 2^{\s}_{s}$.
\end{thm}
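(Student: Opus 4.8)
\emph{Proof proposal.} The plan is to reduce the statement to the classical fractional Sobolev embedding and Rellich--Kondrachov theorems on the Lipschitz cube $(0,T)^{N}$ (see, e.g., \cite{DPV, MRS}). By the surjectivity part of Theorem~\ref{tracethm} one has $\textup{Tr}(\mathbb{X}^{s}_{T})=\mathbb{H}^{s}_{T}$, so it is enough to prove that $\mathbb{H}^{s}_{T}$ embeds continuously into $L^{q}(0,T)^{N}$ for $1\le q\le 2^{\s}_{s}$ and compactly for $1\le q<2^{\s}_{s}$; the corresponding statements for $\textup{Tr}(\mathbb{X}^{s}_{T})$ then follow from \eqref{tracein}, since a sequence bounded in $\mathbb{X}^{s}_{T}$ has traces bounded in $\mathbb{H}^{s}_{T}$. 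The heart of the matter is a comparison between the Fourier-multiplier norm $|\cdot|_{\mathbb{H}^{s}_{T}}$ and the Gagliardo norm
$$
\|u\|_{H^{s}(0,T)^{N}}^{2}:=|u|_{L^{2}(0,T)^{N}}^{2}+\iint_{(0,T)^{N}\times(0,T)^{N}}\frac{|u(x)-u(y)|^{2}}{|x-y|^{N+2s}}\,dx\,dy .
$$

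First I would fix $u=\sum_{k\in\Z^{N}}c_{k}\,e^{\imath\omega k\cdot x}/\sqrt{T^{N}}\in\mathcal{C}^{\infty}_{T}(\R^{N})$. By Parseval's identity $|u|_{L^{2}(0,T)^{N}}^{2}=\sum_{k\in\Z^{N}}|c_{k}|^{2}\le m^{-2s}\,|u|_{\mathbb{H}^{s}_{T}}^{2}$, since $m^{2s}\le(\omega^{2}|k|^{2}+m^{2})^{s}$. For the Gagliardo seminorm, writing $z=x-y$, enlarging the range of $z$ from $x-(0,T)^{N}$ to $\R^{N}$ (the integrand being nonnegative), and then using Tonelli's theorem together with the orthonormality of $\{e^{\imath\omega k\cdot x}/\sqrt{T^{N}}\}$ in $L^{2}(0,T)^{N}$, I obtain
$$
\iint_{(0,T)^{N}\times(0,T)^{N}}\frac{|u(x)-u(y)|^{2}}{|x-y|^{N+2s}}\,dx\,dy\le\int_{\R^{N}}\frac{1}{|z|^{N+2s}}\Bigl(\sum_{k\in\Z^{N}}|c_{k}|^{2}\,|1-e^{-\imath\omega k\cdot z}|^{2}\Bigr)dz .
$$
Since $\int_{\R^{N}}|1-e^{-\imath\xi\cdot z}|^{2}\,|z|^{-N-2s}\,dz=C(N,s)\,|\xi|^{2s}$ for a finite constant $C(N,s)$ (by scaling and rotation invariance, with integrability near the origin ensured by $1-s>0$), the right-hand side equals $C(N,s)\,\omega^{2s}\sum_{k\in\Z^{N}}|c_{k}|^{2}|k|^{2s}\le C(N,s)\,|u|_{\mathbb{H}^{s}_{T}}^{2}$, because $(\omega^{2}|k|^{2})^{s}\le(\omega^{2}|k|^{2}+m^{2})^{s}$. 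Combining the two estimates and using the density of $\mathcal{C}^{\infty}_{T}(\R^{N})$ in $\mathbb{H}^{s}_{T}$, one gets a constant $C>0$ with $\|u\|_{H^{s}(0,T)^{N}}\le C\,|u|_{\mathbb{H}^{s}_{T}}$ for every $u\in\mathbb{H}^{s}_{T}$; hence $\mathbb{H}^{s}_{T}$ is continuously included in $H^{s}((0,T)^{N})$.

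Finally I would invoke the classical results on the bounded Lipschitz domain $(0,T)^{N}$: since $N>2s$, the space $H^{s}((0,T)^{N})$ is continuously embedded in $L^{q}(0,T)^{N}$ for every $1\le q\le 2^{\s}_{s}$ and compactly embedded for every $1\le q<2^{\s}_{s}$. Composing with the continuous inclusion $\mathbb{H}^{s}_{T}\hookrightarrow H^{s}((0,T)^{N})$ established above gives the first assertion; for the second, any sequence bounded in $\mathbb{X}^{s}_{T}$ has, by \eqref{tracein}, traces bounded in $\mathbb{H}^{s}_{T}$, hence bounded in $H^{s}((0,T)^{N})$, hence precompact in $L^{q}(0,T)^{N}$ for $q<2^{\s}_{s}$. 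The only genuinely non-routine step is the norm comparison of the second paragraph — in particular the identity $\int_{\R^{N}}|1-e^{-\imath\xi\cdot z}|^{2}\,|z|^{-N-2s}\,dz=C(N,s)\,|\xi|^{2s}$ — after which everything reduces to well-known facts; this is essentially the argument of \cite{A2, A3}.
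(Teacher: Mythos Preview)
The paper does not actually prove Theorem~\ref{compacttracethm}: it is stated in Section~2 among the preliminaries ``firstly proved in \cite{A2} and \cite{A3}'' and no argument is given in this paper. So there is no in-paper proof to compare your proposal against.

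Your argument is correct and is, as you say at the end, essentially the route of \cite{A2, A3}: use the surjectivity and the trace inequality of Theorem~\ref{tracethm} to reduce to the embedding $\mathbb{H}^{s}_{T}\hookrightarrow L^{q}(0,T)^{N}$; control the Gagliardo $H^{s}((0,T)^{N})$ norm by $|\cdot|_{\mathbb{H}^{s}_{T}}$ via the Fourier computation (the change of variables $z=x-y$, periodicity plus Parseval on the cube, and the scaling identity $\int_{\R^{N}}|1-e^{-\imath\xi\cdot z}|^{2}|z|^{-N-2s}\,dz=C(N,s)|\xi|^{2s}$); and then apply the standard fractional Sobolev and Rellich--Kondrachov theorems on the bounded Lipschitz domain $(0,T)^{N}$ for $N>2s$. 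Each step is sound, including the integrability check for $C(N,s)$ (decay $|z|^{2-N-2s}$ at the origin with $s<1$, and $|z|^{-N-2s}$ at infinity). The only cosmetic remark is that the phrase ``integrability near the origin ensured by $1-s>0$'' could be stated more transparently as ``$2-2s>0$''; the content is the same.
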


Let $g \in \mathbb{H}^{-s}_{T}$, where
$$\mathbb{H}^{-s}_{T}:=\left\{g=\sum_{k\in \Z^{N}} g_{k} \frac{e^{\imath \omega k\cdot x}}{{\sqrt{T^{N}}}}: \sum_{k\in \Z^{N}}\frac{|g_{k}|^{2}}{(\omega^{2} |k|^{2}+m^{2})^{s}}<+\infty\right\}$$ is the dual of $\mathbb{H}^{s}_{T},$, and consider the following two problems:
\begin{equation}\label{P*}
\left\{
\begin{array}{ll}
(-\Delta+m^{2})^{s}u=g &\mbox{ in } (0,T)^{N}  \\
u(x+Te_{i})=u(x) & \mbox{ for all } x\in \R^{N}
\end{array}
\right.
\end{equation}
and
\begin{equation}\label{P**}
\left\{
\begin{array}{ll}
-\dive(y^{1-2s} \nabla v)+m^{2}y^{1-2s}v =0 &\mbox{ in }\mathcal{S}_{T}  \\
v_{| {\{x_{i}=0\}}}= v_{| {\{x_{i}=T\}}} & \mbox{ on } \partial_{L}\mathcal{S}_{T} \\
{\partial_{\nu}^{1-2s} v}=g(x)  &\mbox{ on } \partial^{0}\mathcal{S}_{T}
\end{array}.
\right.
\end{equation}

\begin{defn}
We say that $v \in \mathbb{X}^{s}_{T}$ is a weak solution to \eqref{P**}
if for every $\varphi \in \mathbb{X}^{s}_{T}$ it holds
$$
\iint_{\mathcal{S}_{T}} y^{1-2s} (\nabla v \nabla \varphi + m^{2} v \varphi  ) \, dxdy=\kappa_{s}\langle g, \textup{Tr}(\varphi)\rangle.
$$
Here $\langle \cdot, \cdot \rangle$ is the duality pairing between $\mathbb{H}^{s}_{T}$ and its dual $\mathbb{H}^{-s}_{T}$.
\end{defn}
We have the notion of \textit{weak solution} to problem \eqref{P**}.
\begin{defn}
We say that $u\in \mathbb{H}^{s}_{T}$ is a weak solution to \eqref{P*} if $u=\textup{Tr}(v)$
and $v\in \mathbb{X}^{s}_{T}$ is a weak solution to \eqref{P**}.
\end{defn}

Taking into account Theorem \ref{tracethm} and Theorem \ref{compacttracethm}, it is possible to introduce the notion of extension for a function $u\in \mathbb{H}^{s}_{T}$. More precisely, the next result holds (see also the more general result contained in \cite{ST}).
\begin{thm}
Let $u\in \mathbb{H}^{s}_{T}$. Then, there exists a unique $v\in \mathbb{X}^{s}_{T}$ such that
\begin{equation*}\label{extPu}
\left\{
\begin{array}{ll}
-\dive(y^{1-2s} \nabla v)+m^{2}y^{1-2s}v =0 &\mbox{ in }\mathcal{S}_{T}  \\
v_{| {\{x_{i}=0\}}}= v_{| {\{x_{i}=T\}}} & \mbox{ on } \partial_{L}\mathcal{S}_{T} \\
v(\cdot,0)=u  &\mbox{ on } \partial^{0}\mathcal{S}_{T}
\end{array}
\right.
\end{equation*}
and
\begin{align*}\label{conormal}
-\lim_{y \rightarrow 0^{+}} y^{1-2s}\frac{\partial v}{\partial y}(x,y)=\kappa_{s} (-\Delta+m^{2})^{s}u(x) \mbox{ in } \mathbb{H}^{-s}_{T}.
\end{align*}
We call $v\in \mathbb{X}^{s}_{T}$ the extension of $u\in \mathbb{H}^{s}_{T}$.\par
\indent In particular, if $u=\displaystyle\sum_{k\in \Z^{N}} c_{k} \frac{e^{\imath \omega k\cdot x}}{{\sqrt{T^{N}}}}$, then $v$ is given by
\begin{align*}
v(x,y)=\sum_{k\in \Z^{N}} c_{k} \theta_{k}(y) \frac{e^{\imath \omega k\cdot x}}{{\sqrt{T^{N}}}},
\end{align*}
where $\theta_{k}(y):= \theta(\sqrt{\omega^{2} |k|^{2}+ m^{2}} y)$ and $\theta(y)\in H^{1}(\R_{+},y^{1-2s})$ solves the following ODE
\begin{equation*} \label{ccv}
\left\{
\begin{array}{cc}
&\theta^{''}+\frac{1-2s}{y}\theta^{'}-\theta=0 \mbox{ in } \R_{+}  \\
&\theta(0)=1 \mbox{ and } \theta(\infty)=0
\end{array}.
\right.
\end{equation*}
We note that $$\theta(y)=\displaystyle\frac{2}{\Gamma(s)}\left(\frac{y}{2}\right)K_{s}(y),$$ where $K_{s}$ denotes the modified Bessel function of the second kind with order $s$.\\
\indent Moreover, $v$ satisfies the properties:
\begin{itemize}
\item[$(i)$] $v$ is smooth for $y>0$ and $T$-periodic in $x$$;$
\item[$(ii)$] $\|v\|_{\mathbb{X}^{s}_{T}}\leq \|z\|_{\mathbb{X}^{s}_{T}}$ for any $z\in \mathbb{X}^{s}_{T}$ such that $\textup{Tr}(z)=u$$;$
\item[$(iii)$] $\|v\|_{\mathbb{X}^{s}_{T}}=\sqrt{\kappa_{s}} |u|_{\mathbb{H}^{s}_{T}}$.
\end{itemize}
\end{thm}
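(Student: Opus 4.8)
The plan is to produce $v$, together with $(ii)$ and $(iii)$, by an abstract variational argument resting on Theorem~\ref{tracethm}, and then to identify $v$ explicitly by separating variables in the Fourier basis, which reduces the remaining assertions to a single ordinary differential equation. First I would obtain $v$ as a minimizer. Since $\textup{Tr}:\mathbb{X}^{s}_{T}\to\mathbb{H}^{s}_{T}$ is surjective by Theorem~\ref{tracethm}, the set $K_{u}:=\{z\in\mathbb{X}^{s}_{T}:\textup{Tr}(z)=u\}$ is nonempty, and it is closed and convex because $\textup{Tr}$ is bounded. The functional $z\mapsto\|z\|_{\mathbb{X}^{s}_{T}}^{2}$ is strictly convex, continuous and coercive on the Hilbert space $\mathbb{X}^{s}_{T}$, hence it attains a unique minimum over $K_{u}$ at some $v\in\mathbb{X}^{s}_{T}$; this already gives $(ii)$. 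Differentiating $t\mapsto\|v+t\varphi\|_{\mathbb{X}^{s}_{T}}^{2}$ at $t=0$ along $\varphi\in\mathbb{X}^{s}_{T}$ with $\textup{Tr}(\varphi)=0$ shows that $v$ weakly solves $-\dive(y^{1-2s}\nabla v)+m^{2}y^{1-2s}v=0$ in $\mathcal{S}_{T}$, with the periodic lateral condition and $\textup{Tr}(v)=u$; such a $v$ is unique, because the difference $w$ of two solutions has zero trace, hence is an admissible test function for both, and subtracting the two weak formulations yields $\|w\|_{\mathbb{X}^{s}_{T}}^{2}=0$. Finally, since $v$ solves the homogeneous equation, Theorem~\ref{tracethm}$(ii)$ forces equality in \eqref{tracein}, so $\|v\|_{\mathbb{X}^{s}_{T}}=\sqrt{\kappa_{s}}\,|\textup{Tr}(v)|_{\mathbb{H}^{s}_{T}}=\sqrt{\kappa_{s}}\,|u|_{\mathbb{H}^{s}_{T}}$, which is $(iii)$.

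Next I would identify $v$ and prove $(i)$ and the conormal-derivative formula. Writing $u=\sum_{k}c_{k}e^{\imath\omega k\cdot x}/\sqrt{T^{N}}$ and $\mu_{k}:=(\omega^{2}|k|^{2}+m^{2})^{1/2}$, I expand the minimizer as $v(x,y)=\sum_{k}v_{k}(y)e^{\imath\omega k\cdot x}/\sqrt{T^{N}}$ (Parseval gives $v_{k}\in H^{1}(\R_{+},y^{1-2s})$, and $\textup{Tr}(v)=u$ gives $v_{k}(0)=c_{k}$); testing the weak formulation against $\psi(y)e^{\imath\omega k\cdot x}/\sqrt{T^{N}}$ with $\psi(0)=0$ shows that $v_{k}$ is the unique $H^{1}(\R_{+},y^{1-2s})$-solution of $-(y^{1-2s}v_{k}')'+\mu_{k}^{2}y^{1-2s}v_{k}=0$ with $v_{k}(0)=c_{k}$. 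Setting $v_{k}(y)=c_{k}\theta(\mu_{k}y)$ turns this into $\theta''+\frac{1-2s}{y}\theta'-\theta=0$, $\theta(0)=1$, $\theta\in H^{1}(\R_{+},y^{1-2s})$, and the substitution $\theta(y)=y^{s}w(y)$ reduces it further to the modified Bessel equation $y^{2}w''+yw'-(y^{2}+s^{2})w=0$, whose solution compatible (after multiplication by $y^{s}$) with $H^{1}(\R_{+},y^{1-2s})$ is $w=c\,K_{s}$; imposing $\theta(0)=1$ through $K_{s}(y)\sim\tfrac{1}{2}\Gamma(s)(y/2)^{-s}$ as $y\to0^{+}$ fixes $c=2^{1-s}/\Gamma(s)$, i.e. $\theta(y)=\frac{2}{\Gamma(s)}(y/2)^{s}K_{s}(y)$, and thus the stated formula for $v$. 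Because $K_{s}$ and all its derivatives decay exponentially at $+\infty$, so do those of $\theta$; since $\mu_{k}\ge m$ and $(c_{k})$ is bounded, for every $y_{0}>0$ the series for $v$ and all its term-by-term derivatives converge uniformly on $\R^{N}\times[y_{0},\infty)$, which yields the smoothness for $y>0$ in $(i)$ (periodicity in $x$ being built in). For the conormal derivative, the series expansion of $K_{s}$ (with $\pi/\sin(\pi s)=\Gamma(s)\Gamma(1-s)$ and $\Gamma(1+s)=s\Gamma(s)$) gives $\theta(y)=1-\frac{\Gamma(1-s)}{\Gamma(1+s)}(y/2)^{2s}+o(y^{2s})$ near $0$, hence $\theta'(y)\sim-2^{1-2s}\frac{\Gamma(1-s)}{\Gamma(s)}y^{2s-1}=-\kappa_{s}y^{2s-1}$ and $-\lim_{y\to0^{+}}y^{1-2s}\theta'(y)=\kappa_{s}$; computing mode by mode, with $t=\mu_{k}y$ one has $y^{1-2s}\mu_{k}\theta'(\mu_{k}y)=\mu_{k}^{2s}\,t^{1-2s}\theta'(t)\to-\kappa_{s}\mu_{k}^{2s}$ as $y\to0^{+}$, so $-\lim_{y\to0^{+}}y^{1-2s}\partial_{y}v(x,y)=\kappa_{s}\sum_{k}c_{k}(\omega^{2}|k|^{2}+m^{2})^{s}e^{\imath\omega k\cdot x}/\sqrt{T^{N}}=\kappa_{s}(-\Delta+m^{2})^{s}u$ in $\mathbb{H}^{-s}_{T}$.

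The hard part is the ordinary differential equation analysis of the second paragraph: recognizing the equation for $\theta$ as a modified Bessel equation, isolating the branch compatible with $H^{1}(\R_{+},y^{1-2s})$, and---above all---extracting the normalization $-\lim_{y\to0^{+}}y^{1-2s}\theta'(y)=\kappa_{s}=2^{1-2s}\Gamma(1-s)/\Gamma(s)$ from the small-argument behaviour of $K_{s}$; this is exactly what makes the constants in $(iii)$ and in the conormal formula come out as stated. Should one prefer to avoid Theorem~\ref{tracethm}, property $(iii)$ would instead follow from the identity $\|v\|_{\mathbb{X}^{s}_{T}}^{2}=C_{\theta}\sum_{k}(\omega^{2}|k|^{2}+m^{2})^{s}|c_{k}|^{2}$, with $C_{\theta}:=\int_{0}^{\infty}y^{1-2s}((\theta')^{2}+\theta^{2})\,dy$, together with an integration by parts using the ODE that gives $C_{\theta}=\kappa_{s}$, reducing again to the same limit. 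Everything else---existence and uniqueness of the minimizer, the Euler--Lagrange computation, the Fourier decoupling, and the smoothness from exponential decay---is routine.
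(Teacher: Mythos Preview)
Your argument is correct and complete. Note, however, that the paper itself does not prove this theorem: it is stated in Section~2 as a preliminary result, with the proof deferred to \cite{A2,A3} (and, for the general extension framework, to \cite{ST}), so there is no in-paper proof to compare your proposal against. The route you take---existence/uniqueness of the extension by minimizing the $\mathbb{X}^{s}_{T}$-norm over the affine trace class, the Euler--Lagrange computation, the equality case of Theorem~\ref{tracethm} for $(iii)$, and then Fourier decoupling plus the Bessel-function analysis for the explicit formula, smoothness, and the conormal limit---is exactly the standard one used in those references.

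Two minor remarks. First, you correctly write $\theta(y)=\frac{2}{\Gamma(s)}(y/2)^{s}K_{s}(y)$; the missing exponent $s$ in the displayed formula of the statement is a typo. Second, when you pass from the mode-by-mode limits $-y^{1-2s}v_{k}'(y)\to\kappa_{s}c_{k}\mu_{k}^{2s}$ to convergence of $-y^{1-2s}\partial_{y}v(\cdot,y)$ in $\mathbb{H}^{-s}_{T}$, you are tacitly using that $t\mapsto t^{1-2s}\theta'(t)$ is bounded on $(0,\infty)$ (it tends to $-\kappa_{s}$ at $0$ and decays exponentially at $\infty$), so that dominated convergence applies in the weighted $\ell^{2}$ norm defining $\mathbb{H}^{-s}_{T}$; this is routine but worth one line.
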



\indent
Now, we can reformulate the nonlocal problem (\ref{P}) with periodic boundary conditions, in a local way according the following definitions.
\begin{defn}
Fixing $\lambda>0$, we say that $v \in \mathbb{X}^{s}_{T}$ is a weak solution to \eqref{P**}
if
$$
\iint_{\mathcal{S}_{T}} y^{1-2s} (\nabla v \nabla \varphi + m^{2} v \varphi  ) \, dxdy=\kappa_s\gamma \int_{\partial^{0}\mathcal{S}_{T}} \textup{Tr}(v)\textup{Tr}(\varphi) dx
$$
$$
\qquad+\lambda\kappa_s\int_{\partial^{0}\mathcal{S}_{T}} f(x, \textup{Tr}(v)) \textup{Tr}(\varphi) dx,
$$
for every $\varphi \in \mathbb{X}^{s}_{T}$.
\end{defn}
Finally, we have the notion of \textit{weak solution} to problem \eqref{R} as follows.
\begin{defn}
Fixing $\lambda>0$, we say that $u\in \mathbb{H}^{s}_{T}$ is a weak solution to \eqref{P} if $u=\textup{Tr}(v)$
and $v\in \mathbb{X}^{s}_{T}$ is a weak solution to \eqref{R}.
\end{defn}

\section{Proof of Theorem \ref{thm1}}
\indent
The aim of this section is to prove that under natural assumptions on the nonlinear term, there exist two (weak) solutions to the extended problem \eqref{R}. Let us introduce the following functional $\mathcal{J}_{\lambda}: \mathbb{X}^{s}_{T} \rightarrow \R$ defined by
\begin{align}
\mathcal{J}_{\lambda}(v):=\frac{1}{2 \lambda}\left(\|v\|^{2}_{\mathbb{X}^{s}_{T}} -\gamma \kappa_s |\textup{Tr}(v)|_{L^{2}(0,T)^{N}}^{2}\right)- \kappa_s\int_{\partial^{0}\mathcal{S}_{T}} F(x,\textup{Tr}(v)) dx,
\end{align}
where $\lambda>0$ is fixed.

\indent We will prove that $\mathcal{J}_{\lambda}$ satisfies the assumptions of the following abstract result:
\begin{thm}\label{PucciSerrinRicceri}
Let $E$ be a reflexive real Banach space, and let $\Phi,\Psi:E\to\R$
be two continuously G\^{a}teaux differentiable functionals such that $\Phi$ is
sequentially weakly lower semicontinuous and coercive. Further, assume that $\Psi$
is sequentially weakly continuous.
In addition, assume that, for each $\mu>0$, the functional $J_\mu:=\mu\Phi-\Psi$ satisfies the classical compactness Palais-Smale $($briefly $(\rm PS)$$)$ condition. Then, for each $\varrho>\displaystyle\inf_E\Phi$ and each
$$
\mu>\inf_{u\in\Phi^{-1}\big((-\infty,\varrho)\big)}
\frac{\displaystyle\sup_{v\in\Phi^{-1}\big((-\infty,\varrho)\big)}\Psi(v)-\Psi(u)}{\varrho-\Phi(u)}\,
$$
the following alternative holds$:$ either the functional $J_\mu$ has a strict global minimum which lies in $\Phi^{-1}\big((-\infty,\varrho))$, or $J_\mu$ has at least two critical points one of which lies in $\Phi^{-1}\big((-\infty,\varrho))$.
\end{thm}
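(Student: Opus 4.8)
The plan is to produce the two critical points by two distinct mechanisms: a constrained minimization in the spirit of Ricceri supplies the first one, automatically inside the prescribed sublevel set of $\Phi$, while a mountain-pass argument in the spirit of Pucci--Serrin supplies the second one whenever the first fails to be a strict global minimum.

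First I would set $J_\mu:=\mu\Phi-\Psi$ and record the structural facts forced by the hypotheses. Since $E$ is reflexive, $\Phi$ is sequentially weakly lower semicontinuous, and $\varrho>\inf_E\Phi$, the strict sublevel set $\Phi^{-1}((-\infty,\varrho))$ is nonempty and, being the complement of the weakly closed set $\Phi^{-1}([\varrho,+\infty))$, is weakly open and hence open. Then I would invoke Ricceri's local minimum theorem \cite{R2}: its assumptions coincide with the ones in force here, so for every $\mu$ exceeding the infimum displayed in the statement it yields a point $u_0\in\Phi^{-1}((-\infty,\varrho))$ at which the restriction of $J_\mu$ to $\Phi^{-1}((-\infty,\varrho))$ attains its global minimum. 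Because this sublevel set is open, $u_0$ is a local minimum of $J_\mu$ on all of $E$, and since $J_\mu$ is G\^{a}teaux differentiable, $u_0$ is a critical point of $J_\mu$ lying in $\Phi^{-1}((-\infty,\varrho))$.

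The remaining step is a dichotomy on $u_0$. If $u_0$ is a strict global minimum of $J_\mu$ on $E$, the first horn of the alternative holds. Otherwise I claim $J_\mu$ has a critical point $u_1\ne u_0$, which together with $u_0$ yields the second horn. If $u_0$ is a global minimum but not a strict one, there is $u_1\ne u_0$ with $J_\mu(u_1)=J_\mu(u_0)$, and a global minimizer of a G\^{a}teaux differentiable functional is a critical point. If instead $u_0$ is not a global minimum, pick $w$ with $J_\mu(w)<J_\mu(u_0)$; since $u_0$ is a local minimum there is $r>0$ with $J_\mu\ge J_\mu(u_0)$ on the closed ball $\overline{B}_r(u_0)$, which forces $\|w-u_0\|>r$. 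Thus the sphere $\partial B_r(u_0)$ separates $u_0$ from $w$ and $\inf_{\partial B_r(u_0)}J_\mu\ge J_\mu(u_0)>J_\mu(w)$: this is precisely the mountain-pass geometry. With $\Gamma$ the set of continuous paths in $E$ joining $u_0$ to $w$ and $c:=\inf_{\gamma\in\Gamma}\max_{t\in[0,1]}J_\mu(\gamma(t))\ge J_\mu(u_0)$, the Pucci--Serrin theorem \cite{puse} — applicable because $J_\mu\in C^1$ satisfies $(\mathrm{PS})$ — furnishes a critical point $u_1$ at level $c$ which, in the borderline case $c=J_\mu(u_0)$, can be taken on $\partial B_r(u_0)$; in either case $u_1\ne u_0$ (and $u_1\ne w$). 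This establishes the alternative.

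I expect the only subtle point to be the borderline case $c=J_\mu(u_0)$: when $u_0$ is a non-strict local minimum the mountain-pass level may equal $J_\mu(u_0)$, and then the classical mountain pass theorem alone does not separate the new critical point from $u_0$. This is exactly what the Pucci--Serrin analysis of the critical set at the mountain-pass level (equivalently, the Ghoussoub--Preiss separation refinement) is designed to overcome, so the care lies in citing it in the right form and in checking that $(\mathrm{PS})$ is used at level $c$. A purely logical remark makes the bookkeeping transparent: if $J_\mu$ happened to have a strict global minimum lying outside $\Phi^{-1}((-\infty,\varrho))$, that minimizer and $u_0$ would already be two distinct critical points, one of which lies in $\Phi^{-1}((-\infty,\varrho))$, so the second horn holds; hence failure of the first horn always drives us into the second.
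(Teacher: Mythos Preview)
The paper does not give its own proof of this theorem; immediately after the statement it writes ``For a proof of Theorem~\ref{PucciSerrinRicceri}, one can see \cite{R0}'' and moves on. Your sketch is exactly the argument of that reference: first invoke Ricceri's variational principle \cite{R2} to produce a local minimizer $u_0$ of $J_\mu$ inside $\Phi^{-1}((-\infty,\varrho))$, then run the Pucci--Serrin mountain-pass theorem \cite{puse} to obtain a second critical point whenever $u_0$ is not a strict global minimum. So your approach coincides with the one the paper cites.

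One small technical remark: your justification that $\Phi^{-1}((-\infty,\varrho))$ is \emph{weakly} open is not correct as stated --- sequential weak lower semicontinuity of $\Phi$ makes the \emph{closed} sublevel sets $\{\Phi\le c\}$ sequentially weakly closed, but it does not force $\{\Phi\ge\varrho\}$ to be weakly closed, so the strict sublevel set need not be weakly open. This does not damage the argument, however, because Ricceri's theorem in \cite{R2} already delivers the conclusion you want directly: the minimizer of $J_\mu$ on $\Phi^{-1}((-\infty,\varrho))$ is a local minimum of $J_\mu$ in the weak topology of $E$, hence (since weak-open sets are norm-open) also a norm local minimum and therefore a critical point. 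You can simply cite that conclusion rather than arguing via openness of the sublevel set.
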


\noindent
\indent For a proof of Theorem \ref{PucciSerrinRicceri}, one can see \cite{R0}. We refer also to \cite{MR, Ri1, R3} and references therein for recent applications of Ricceri's variational principle \cite{R2}.\par

\smallskip
\indent
By using $(f_2)$ and Theorem \ref{compacttracethm}, it follows that $\mathcal{J}_{\lambda}$ is well defined, $\mathcal{J}_{\lambda}\in \mathcal{C}^{1}(\mathbb{X}^{s}_{T},\R)$ and
$$
\langle \mathcal{J}'_{\lambda}(v), \varphi\rangle = \frac{1}{\lambda}\left(\iint_{\mathcal{S}_{T}} y^{1-2s} (\nabla v \nabla \varphi+m^{2} v \varphi) dx dy-\gamma \kappa_s\int_{\partial^{0}\mathcal{S}_{T}} \textup{Tr}(v) \textup{Tr}(\varphi) dx\right)
$$
$$
\quad\quad-\kappa_s\int_{\partial^{0}\mathcal{S}_{T}} f(x, \textup{Tr}(v)) \textup{Tr}(\varphi) dx,
$$
for every $\varphi \in \mathbb{X}^{s}_{T}$.

\indent
Let $\varrho>0$ and set $\mu:=1/(2\lambda)$, with $\lambda$ as in the statement of Theorem \ref{thm1}.\\
We set  $E:=\mathbb{X}^{s}_{T}$, $J_\mu:=\mathcal{J}_{\lambda}$ and
$$
\Phi(v):=\|v\|_{\mathbb{X}^{s}_{T}}^2-\gamma\kappa_s|\textup{Tr}(v)|^{2}_{L^{2}(0, T)^{N}}=:\|v\|_{e}^{2},
$$
as well as
$$
\Psi(v):=\displaystyle\kappa_s\int_{\partial^{0} \mathcal{S}_{T}} F(x,\textup{Tr}(v))dx,
$$
for every $v \in \mathbb{X}^{s}_{T}$.\par

Taking into account that for any $u=\displaystyle\sum_{k\in \Z^{N}} c_{k} \frac{e^{\imath \omega k\cdot x}}{{\sqrt{T^{N}}}} \in \mathbb{H}^{s}_{T}$
\begin{align*}
m^{2s}|u|_{L^{2}(0,T)^{N}}^{2}&=m^{2s} \sum_{k\in \Z^{N}} |c_{k}|^{2} \\
&\leq  \sum_{k\in \Z^{N}} (\omega^{2}|k|^{2}+m^{2})^{s} |c_{k}|^{2}= |u|_{\mathbb{H}^{s}_{T}}^{2},
\end{align*}
and by using Theorem \ref{tracethm}, it is easily seen that

\begin{align*}\label{partequad}
\|v\|^{2}_{\mathbb{X}^{s}_{T}} -\gamma \kappa_{s} |\textup{Tr}(v)|_{L^{2}(0,T)^{N}}^{2}&= \|v\|^{2}_{\mathbb{X}^{s}_{T}} - \frac{\gamma}{m^{2s}} \kappa_{s} m^{2s} |\textup{Tr}(v)|_{L^{2}(0,T)^{N}}^{2}\\
&\geq \|v\|^{2}_{\mathbb{X}^{s}_{T}} -\frac{\gamma}{m^{2s}} \kappa_{s} |\textup{Tr}(v)|_{\mathbb{H}^{s}_{T}}^{2}\\
&\geq \left(1- \frac{\gamma}{m^{2s}} \right) \|v\|^{2}_{\mathbb{X}^{s}_{T}}.
\end{align*}
\indent This fact and $0<\gamma<m^{2s}$, yield
\begin{equation}\label{equivalent}
\sqrt{1-\frac{\gamma}{m^{2s}}}\|v\|_{\mathbb{X}^{s}_{T}}\leq \|v\|_{e}\leq \|v\|_{\mathbb{X}^{s}_{T}}
\end{equation}
for every $v \in \mathbb{X}^{s}_{T}$, so $\|\cdot\|_{e}$ is equivalent to the standard norm $\|\cdot\|_{\mathbb{X}^{s}_{T}}$.\par
\indent Then, $\Phi$ is sequentially weakly lower semicontinuous and coercive, and $\Psi$ is sequentially weakly continuous thanks to Theorem \ref{compacttracethm}. Now, we show that there exists $v_{0} \in \mathbb{X}^{s}_{T}$ such that
\begin{equation}\label{inf}
J_\mu(tv_0)\rightarrow -\infty
\end{equation}
as $t\rightarrow+\infty$.

By using the Ambrosetti-Rabinowitz condition, it is easy to see that the potential $F(x,t)$ restricted to $[0, T]^{N}\times\R$ is $\alpha$-superhomogeneous at infinity, i.e.
\begin{equation}\label{suphom}
F(x,tv)\geq F(x,v)t^{\alpha},
\end{equation}
for every $x\in [0, T]^{N}$, $(t,v)\in \R^{2}$ with $t\geq 1$ and $|v|\geq r_0$.\par
\indent Indeed, for $t=1$, the equality is obvious. Otherwise, fix $|v|\geq r_0$ and define $g(x,t):=F(x,tv)$, for every $x\in [0, T]^{N}$ and $t\in (1,+\infty)$. By $(f_3)$ it follows that
$$
\frac{\partial_{t}g(x,t)}{g(x,t)}\geq \frac{\alpha}{t},
$$
for every $x\in [0, T]^{N}$ and $t> 1$. By integrating in $(1,t]$ we get that
$$
\int_{1}^{t}\frac{\partial_{t}g(x,\tau)}{g(x,\tau)} d\tau= \log \frac{g(x,t)}{g(x,1)}\geq \log t^{\alpha}.
$$
As a consequence, one has
\begin{align*}
g(x,t)&=F(x,tv) \geq g(x,1)t^{\alpha}= F(x,v)t^\alpha,
\end{align*}
for every $x\in [0, T]^{N}$, $|v|\geq r_0$ and $t> 1$.

\indent
Then, by using (\ref{suphom}), it follows that
\begin{align*}
J_\mu(tv_0)& =\mu\Phi(tv_0)-\Psi(tv_0)\\
& \leq \mu t^2 \Phi(v_0)-\kappa_{s}t^{\alpha}\int_{\{|\textup{Tr}(v_{0})|\geq r_0\}}F(x,\textup{Tr}(v_{0}))dx+\kappa_{s}\nu_1 T^{N},
\end{align*}
where
$$
\nu_{1}:=\sup\{|F(x,t)|:x\in [0, T]^{N}\,\mbox{and}\, |t|\leq r_0\}.
$$

Since $\alpha>2$, choosing $v_{0} \in \mathbb{X}^{s}_{T}$ such that
$$
|\{x\in (0, T)^{N}:|\textup{Tr}(v_{0})|\geq r_0\}|>0,
$$
we deduce that \eqref{inf} holds.
Hence, the functional $J_\mu$ is unbounded from below. \\
In order to apply Theorem \ref{PucciSerrinRicceri}, we need to prove the compactness Palais-Smale condition for the functional $\mathcal{J}_{\lambda}$.\par
\indent For the sake of completeness, we recall that a $\mathcal{C}^1$-functional $J:E\to\R$, where $E$ is a real Banach space with topological dual $E^{*}$, satisfies the \textit{Palais-Smale condition at level} $\zeta\in\R$, (abbreviated $\textrm{(PS)}_{\zeta}$) when:
\begin{itemize}
\item[$\textrm{(PS)}_{\zeta}$] {\it Every sequence $\{v_j\}_{j\in \N}$ in $E$ such that
$$
J(v_{j})\to \zeta, \quad \mbox{ and }\quad \|J'(v_{j})\|_{E^{*}}\to0,
$$
as $j\rightarrow +\infty$, possesses a convergent subsequence.}
\end{itemize}

\indent
We say that $J$ satisfies the \textit{Palais-Smale condition} (abbreviated $\textrm{(PS)}$) if $\textrm{(PS)}_{\zeta}$ holds for every $\zeta\in \R$.

\smallskip
\indent
Proceeding as in \cite{MF}, in the next two lemmas we verify the compactness $(\rm PS)$ condition for the functional $\mathcal{J}_{\lambda}$:

\begin{lem}\label{lemma1}
Assume that conditions $(f_1),(f_2)$ and $(f_3)$ are satisfied. Then, every Palais-Smale sequence for the functional $\mathcal{J}_{\lambda}$ is bounded in $\mathbb{X}^{s}_{T}$.
\end{lem}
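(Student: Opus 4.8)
The plan is to run the classical Ambrosetti--Rabinowitz argument: test $\mathcal{J}'_\lambda$ against the Palais--Smale sequence itself. Fix $\zeta\in\R$ and let $\{v_j\}_{j\in\N}\subset\mathbb{X}^{s}_{T}$ satisfy $\mathcal{J}_\lambda(v_j)\to\zeta$ and $\|\mathcal{J}'_\lambda(v_j)\|_{(\mathbb{X}^{s}_{T})^{*}}\to 0$. I would consider the combination
$$
\mathcal{J}_\lambda(v_j)-\frac1\alpha\langle\mathcal{J}'_\lambda(v_j),v_j\rangle
=\Bigl(\frac1{2\lambda}-\frac1{\alpha\lambda}\Bigr)\|v_j\|_{e}^{2}
-\kappa_{s}\int_{\partial^{0}\mathcal{S}_{T}}\Bigl(F(x,\textup{Tr}(v_j))-\frac1\alpha f(x,\textup{Tr}(v_j))\textup{Tr}(v_j)\Bigr)dx,
$$
where $\|v\|_{e}^{2}=\Phi(v)=\|v\|^{2}_{\mathbb{X}^{s}_{T}}-\gamma\kappa_{s}|\textup{Tr}(v)|^{2}_{L^{2}(0,T)^{N}}$, and I used that $\Phi$ is quadratic so $\langle\Phi'(v),v\rangle=2\Phi(v)$.

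Next I would estimate the integral. Split $\partial^{0}\mathcal{S}_{T}$ into $A_j:=\{x:\ |\textup{Tr}(v_j)(x)|\geq r_0\}$ and its complement. On $A_j$, hypothesis $(f_3)$ gives $\alpha F(x,\textup{Tr}(v_j))\leq f(x,\textup{Tr}(v_j))\textup{Tr}(v_j)$, hence the integrand there is $\leq0$ and that part of the integral, after the sign change, is $\geq 0$. On the complement, since $f$ is continuous and $T$-periodic in $x$, both $|F(x,t)|$ and $|f(x,t)t|$ are bounded by a constant for $x\in[0,T]^{N}$ and $|t|\leq r_0$, so that part contributes in absolute value at most $\kappa_{s}C_0T^{N}$ for some $C_0=C_0(r_0,\alpha)$. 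Using $\alpha>2$, so that $\frac1{2\lambda}-\frac1{\alpha\lambda}=\frac1\lambda\bigl(\frac12-\frac1\alpha\bigr)>0$, these estimates yield
$$
\Bigl(\frac12-\frac1\alpha\Bigr)\frac1\lambda\|v_j\|_{e}^{2}\ \leq\ \mathcal{J}_\lambda(v_j)-\frac1\alpha\langle\mathcal{J}'_\lambda(v_j),v_j\rangle+\kappa_{s}C_0T^{N}.
$$

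Finally I would bound the right-hand side by the Palais--Smale information: $\mathcal{J}_\lambda(v_j)=\zeta+o(1)$ is bounded, while $|\langle\mathcal{J}'_\lambda(v_j),v_j\rangle|\leq\|\mathcal{J}'_\lambda(v_j)\|_{(\mathbb{X}^{s}_{T})^{*}}\|v_j\|_{\mathbb{X}^{s}_{T}}=\varepsilon_j\|v_j\|_{\mathbb{X}^{s}_{T}}$ with $\varepsilon_j\to0$. Invoking the norm equivalence \eqref{equivalent}, i.e. $\|v_j\|_{e}^{2}\geq(1-\gamma/m^{2s})\|v_j\|^{2}_{\mathbb{X}^{s}_{T}}$ (here $0<\gamma<m^{2s}$ is used), the inequality above becomes
$$
c_1\|v_j\|^{2}_{\mathbb{X}^{s}_{T}}\ \leq\ c_2+\frac{\varepsilon_j}{\alpha}\|v_j\|_{\mathbb{X}^{s}_{T}}
$$
for suitable constants $c_1>0$ and $c_2\geq0$ independent of $j$; since $\varepsilon_j\to0$, this quadratic inequality forces $\sup_j\|v_j\|_{\mathbb{X}^{s}_{T}}<+\infty$, which is the claim. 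I do not expect a genuine obstacle here: the only points needing a little care are the treatment of the region $\{|\textup{Tr}(v_j)|<r_0\}$, handled by continuity and $T$-periodicity of $f$, and the bookkeeping of constants; the structural hypotheses $\alpha>2$ and $0<\gamma<m^{2s}$ are precisely what make, respectively, the coefficient of $\|v_j\|_{e}^{2}$ positive and the energy norm equivalent to $\|\cdot\|_{\mathbb{X}^{s}_{T}}$.
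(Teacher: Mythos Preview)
Your proof is correct and follows essentially the same approach as the paper: form the combination $\mathcal{J}_\lambda(v_j)-\tfrac1\alpha\langle\mathcal{J}'_\lambda(v_j),v_j\rangle$, split the boundary integral according to $|\textup{Tr}(v_j)|\gtrless r_0$, use $(f_3)$ and continuity/periodicity of $f$ respectively, and invoke \eqref{equivalent}. The only cosmetic difference is that the paper argues by contradiction (assuming $\|v_j\|_{\mathbb{X}^s_T}\to\infty$ and dividing the resulting inequality by $\|v_j\|_{\mathbb{X}^s_T}$), whereas you conclude boundedness directly from the quadratic inequality $c_1\|v_j\|_{\mathbb{X}^s_T}^2\le c_2+\tfrac{\varepsilon_j}{\alpha}\|v_j\|_{\mathbb{X}^s_T}$.
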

\begin{proof}
Let $\{v_{j}\}_{j\in \N}\subset \mathbb{X}^{s}_{T}$ be a Palais-Smale sequence i.e.
\begin{equation}\label{E1}
\mathcal{J}_\lambda(v_{j})\rightarrow \zeta,
\end{equation}
for $\zeta\in \R$ and
\begin{equation}\label{E2}
\|\mathcal{J}_{\lambda}'(v_j)\|_{\mathbb{X}^{-s}_{T}}\to0,
\end{equation}
as $j\rightarrow +\infty$, where
$$
\|\mathcal{J}_{\lambda}'(v_{j})\|_{\mathbb{X}^{-s}_{T}}:=\sup\Big\{ \big|\langle\,\mathcal{J}'_\lambda(v_{j}),\varphi\,\rangle \big|\,: \;
\varphi\in
\mathbb{X}^{s}_{T}, \|\varphi\|_{\mathbb{X}^{s}_{T}}=1\Big\}.
$$
\indent
Suppose by contradiction that $\{v_{j}\}_{j\in \N}$ is not bounded in $\mathbb{X}^{s}_{T}$.
Then, up to a subsequence, we may assume that
$$
\|v_j\|_{\mathbb{X}^{s}_{T}}\rightarrow +\infty \mbox{ as } j\rightarrow +\infty.
$$
It follows that for every $j\in \N$
 \begin{align}\label{sire}
\mathcal{J}_{\lambda}(v_{j})&- \frac{\langle \mathcal{J}'_{\lambda}(v_{j}),v_{j}\rangle }{\alpha}\nonumber \\
&= \frac{1}{2\lambda}\left(\frac{\alpha-2}{\alpha}\right)[\|v_j\|^2_{\mathbb{X}^{s}_{T}}-\gamma \kappa_{s}|\textup{Tr}(v_{j})|^{2}_{L^{2}(0, T)^{N}}] \nonumber \\
& +\kappa_{s}\int_{\partial^{0}\mathcal{S}_{T}}\left[\frac{f(x,\textup{Tr}(v_{j}))\textup{Tr}(v_{j})}{\alpha}-F(x,\textup{Tr}(v_{j}))\right]dx.
\end{align}
Combining (\ref{sire}) and (\ref{equivalent}) we get, for any $j\in \N$
\begin{align*}
&\frac{1}{2\lambda}\left(\frac{\alpha-2}{\alpha}\right)\Bigl(1-\frac{\gamma}{m^{2s}}\Bigr)\|v_j\|^2_{\mathbb{X}^{s}_{T}}\\
&\leq \mathcal{J}_\lambda(v_j)-\frac{\langle \mathcal{J}'_\lambda(v_j),v_j\rangle }{\alpha} \\
&- \kappa_{s}\int_{|\textup{Tr}(v_{j})|>r_0}\left[\frac{f(x,\textup{Tr}(v_{j}))\textup{Tr}(v_{j})}{\alpha}-F(x,\textup{Tr}(v_{j}))\right]dx + \kappa_{s}\nu_2 T^{N},
\end{align*}
where
$$
\nu_{2}:=\sup\left\{\left|\frac{f(x,t)t}{\alpha}-F(x,t)\right|:x\in [0, T]^{N}, |t|\leq r_0\right\}.
$$
\indent
Now, we observe that condition $(f_3)$ yields
$$
\int_{|\textup{Tr}(v_{j})|>r_0}\left[\frac{f(x,\textup{Tr}(v_{j}))\textup{Tr}(v_{j})}{\alpha}-F(x,\textup{Tr}(v_{j}))\right]d\xi\geq 0,
$$
so, we deduce that for every $j\in \N$
\begin{align*}
\frac{1}{2\lambda}\left(\frac{\alpha-2}{\alpha}\right)\left(1-\frac{\gamma}{m^{2s}}\right) \|v_j\|^2_{\mathbb{X}^{s}_{T}}\leq \mathcal{J}_{\lambda}(v_j)-\frac{\langle \mathcal{J}_{\lambda}'(v_j),v_j\rangle }{\alpha}+\kappa_{s}\nu_2T^{N}.
\end{align*}

\noindent Then, for every $j\in \N$ we have
\begin{align*}
\frac{1}{2\lambda}\left(\frac{\alpha-2}{\alpha}\right)\Bigl(1-\frac{\gamma}{m^{2s}}\Bigr)\|v_j\|^2_{\mathbb{X}^{s}_{T}}\leq \mathcal{J}_{\lambda}(v_j)+\frac{1}{\alpha}{\|\mathcal{J}_{\lambda}'(v_j)\|_{\mathbb{X}^{-s}_{T}}\|v_j\|_{\mathbb{X}^{s}_{T}}}+\kappa_{s}\nu_2 T^{N}.
\end{align*}

Dividing both members by $\|v_j\|_{\mathbb{X}^{s}_{T}}$ and letting $j\rightarrow +\infty$, we obtain a contradiction.
\end{proof}

As an immediate consequence of Lemma \ref{lemma1} we are able to prove the following compactness property.
\begin{lem}\label{lemma2}
Assume that conditions $(f_1),(f_2)$ and $(f_3)$ are verified. Then, the functional $\mathcal{J}_{\lambda}$
satisfies the compactness $(\rm PS)$ condition.
\end{lem}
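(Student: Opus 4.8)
The plan is to deduce the $(\mathrm{PS})$ condition from the boundedness established in Lemma~\ref{lemma1} together with the compact trace embedding of Theorem~\ref{compacttracethm}. Let $\{v_j\}_{j\in\N}\subset\mathbb{X}^s_T$ be a Palais--Smale sequence for $\mathcal{J}_\lambda$ at an arbitrary level $\zeta\in\R$. By Lemma~\ref{lemma1} it is bounded in $\mathbb{X}^s_T$, and since $\mathbb{X}^s_T$ is a Hilbert space, hence reflexive, after passing to a subsequence we may assume $v_j\rightharpoonup v$ weakly in $\mathbb{X}^s_T$ for some $v\in\mathbb{X}^s_T$. By the compactness statement in Theorem~\ref{compacttracethm} we then have $\textup{Tr}(v_j)\to\textup{Tr}(v)$ strongly in $L^r(0,T)^N$ for every $1\le r<2^{\s}_s$; in particular for $r=1$, $r=2$ and $r=q$ (recall $2<q<2^{\s}_s$ from $(f_2)$).

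First I would test $\mathcal{J}'_\lambda(v_j)$ against $v_j-v$. Since $\{v_j-v\}$ is bounded in $\mathbb{X}^s_T$ and $\|\mathcal{J}'_\lambda(v_j)\|_{\mathbb{X}^{-s}_T}\to0$, we get $\langle\mathcal{J}'_\lambda(v_j),v_j-v\rangle\to0$. Using the explicit formula for $\mathcal{J}'_\lambda$, this gives
$$
\frac{1}{\lambda}\,(v_j,v_j-v)_{\mathbb{X}^s_T}
=\frac{\gamma\kappa_s}{\lambda}\int_{\partial^0\mathcal{S}_T}\textup{Tr}(v_j)\,\textup{Tr}(v_j-v)\,dx
+\kappa_s\int_{\partial^0\mathcal{S}_T}f(x,\textup{Tr}(v_j))\,\textup{Tr}(v_j-v)\,dx+o(1),
$$
where $(\cdot,\cdot)_{\mathbb{X}^s_T}$ denotes the inner product inducing the norm $\|\cdot\|_{\mathbb{X}^s_T}$.

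Next I would check that both boundary integrals on the right-hand side vanish in the limit. For the first one, $\textup{Tr}(v_j)$ is bounded in $L^2(0,T)^N$ while $\textup{Tr}(v_j-v)\to0$ in $L^2(0,T)^N$, so the Cauchy--Schwarz inequality suffices. For the second one, hypothesis $(f_2)$ yields $|f(x,\textup{Tr}(v_j))|\le a_1+a_2|\textup{Tr}(v_j)|^{q-1}$, which is bounded in $L^{q/(q-1)}(0,T)^N$ because $\textup{Tr}(v_j)$ is bounded in $L^q(0,T)^N$ by the continuous embedding of Theorem~\ref{compacttracethm}; since $\textup{Tr}(v_j-v)\to0$ in $L^q(0,T)^N$, Hölder's inequality gives the convergence to $0$. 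Consequently $(v_j,v_j-v)_{\mathbb{X}^s_T}\to0$. On the other hand $(v,v_j-v)_{\mathbb{X}^s_T}\to0$ by the weak convergence $v_j\rightharpoonup v$. Subtracting the two relations, $\|v_j-v\|_{\mathbb{X}^s_T}^2=(v_j-v,v_j-v)_{\mathbb{X}^s_T}\to0$, i.e. $v_j\to v$ strongly in $\mathbb{X}^s_T$, which is the desired compactness.

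The only genuinely delicate point is the handling of the nonlinear term $\int_{\partial^0\mathcal{S}_T}f(x,\textup{Tr}(v_j))\,\textup{Tr}(v_j-v)\,dx$: it is here that the subcritical growth exponent $q<2^{\s}_s$ in $(f_2)$ and the \emph{compactness} (not merely continuity) of the trace embedding in Theorem~\ref{compacttracethm} are used in an essential way, ensuring simultaneously that $f(x,\textup{Tr}(v_j))$ stays bounded in the dual exponent space and that $\textup{Tr}(v_j-v)$ converges strongly to zero in $L^q(0,T)^N$. Everything else reduces to standard Hilbert-space manipulations in $\mathbb{X}^s_T$, so I do not anticipate further obstacles.
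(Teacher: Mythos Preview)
Your proposal is correct and follows essentially the same route as the paper's own proof: extract a weakly convergent subsequence via Lemma~\ref{lemma1}, test $\mathcal{J}'_\lambda(v_j)$ against $v_j-v$, kill the two boundary integrals using the compact trace embedding of Theorem~\ref{compacttracethm} together with the growth bound $(f_2)$, and conclude strong convergence from $(v_j,v_j-v)_{\mathbb{X}^s_T}\to 0$ combined with weak convergence. Your handling of the $\gamma$-term and of the nonlinear term via H\"older with the dual exponent $q/(q-1)$ is in fact slightly more explicit than what the paper writes, but the argument is the same.
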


\begin{proof}

Let $\{v_{j}\}_{j\in \N}\subset \mathbb{X}^{s}_{T}$ be a Palais-Smale
sequence. By Lemma \ref{lemma1}, the sequence $\{v_j\}_{j\in \N}$ is bounded in $\mathbb{X}^{s}_{T}$, so we can extract a subsequence,
which for simplicity we denote again by $\{v_{j}\}_{j\in \N}$, such that
$v_{j}\rightharpoonup v$ in $\mathbb{X}^{s}_{T}$. \par
\indent This means that
\begin{align}\label{convergenze0}
\iint_{\mathcal{S}_{T}} y^{1-2s} (\nabla v_{j} \nabla \varphi+m^{2} v_{j} \varphi) dxdy \rightarrow \iint_{\mathcal{S}_{T}} y^{1-2s} (\nabla v \nabla \varphi+m^{2} v \varphi) dx dy
\end{align}
for any $\varphi\in \mathbb{X}^{s}_{T}$,
as $j\to +\infty$.\par
\indent We will prove that ${v_{j}}$ strongly converges to $v\in \mathbb{X}^{s}_{T}$.
Firstly, we can observe that
\begin{align}\label{jj}
\langle a(v_{j}),v_{j}-v \rangle &= \langle \mathcal{J}_{\lambda}'(v_j),v_j-v \rangle+\frac{\gamma}{\lambda} \int_{\partial^{0} \mathcal{S}_{T}} \textup{Tr}(v_{j})(\textup{Tr}(v_{j})-\textup{Tr}(v)) dx \nonumber \\
&+\int_{\partial^{0} \mathcal{S}_{T}}f(x,\textup{Tr}(v_{j}))(\textup{Tr}(v_{j})-\textup{Tr}(v))dx,
\end{align}
where we set
\begin{align}\label{finestra}
\langle a(v_{j}),v_{j}-v \rangle &:= \frac{1}{\lambda}\Bigg(\|v_j\|_{\mathbb{X}^{s}_{T}}^2
-\iint_{\mathcal{S}_{T}} y^{1-2s} (\nabla v_{j} \nabla v+m^{2} v_{j} v) dx dy \Bigg).
\end{align}

\indent Taking into account $\|\mathcal{J}_{\lambda}'(v_j)\|_{\mathbb{X}^{-s}_{T}}\to 0$ and $\{v_j-v\}_{j\in\N}$ is bounded in $\mathbb{X}^{s}_{T}$,
by
$$
|\langle
\mathcal{J}_{\lambda}'(v_j),v_j-v\rangle|\leq\|\mathcal{J}_{\lambda}'(v_j)\|_{\mathbb{X}^{-s}_{T}}\|v_j-v\|_{\mathbb{X}^{s}_{T}},
$$
follows that
\begin{eqnarray}\label{j2}
\langle \mathcal{J}_{\lambda}'(v_j),v_j-v \rangle\to0
\end{eqnarray}
as $j\rightarrow +\infty$.\par
Since the embedding $\textup{Tr}(\mathbb{X}^{s}_{T})\hookrightarrow L^{q}(0, T)^{N}$ is compact, it is clear that up to a subsequence, $\textup{Tr}(v_{j})\to \textup{Tr}(v)$ strongly in $L^q(0, T)^{N}$.
So, by condition $(f_2)$, we obtain
\begin{eqnarray}\label{j3}
\int_{\partial^{0}\mathcal{S}_{T}}|f(x,\textup{Tr}(v_{j}))||\textup{Tr}(v_{j})-\textup{Tr}(v)|dx\to0.
\end{eqnarray}
\indent Putting together \eqref{jj}, \eqref{j2} and \eqref{j3} we get
\begin{eqnarray}\label{fin}
\langle a(v_{j}),v_{j}-v \rangle
\rightarrow 0,
\end{eqnarray}
when $j\rightarrow +\infty$.
Then, in view of \eqref{finestra}, \eqref{fin} reads
\begin{eqnarray}\label{fin2}
\|v_{j}\|^{2}_{\mathbb{X}^{s}_{T}}-\iint_{\mathcal{S}_{T}} y^{1-2s} (\nabla v_{j} \nabla v+m^{2} v_{j} v) dx dy\rightarrow 0,
\end{eqnarray}
when $j\rightarrow +\infty$.
By \eqref{fin2} and \eqref{convergenze0} it follows that
\begin{align*}
\limsup_{j\rightarrow \infty}\iint_{\mathcal{S}_{T}} y^{1-2s} (\nabla v_{j} \nabla v+m^{2} v_{j} v) dx dy=\iint_{\mathcal{S}_{T}} y^{1-2s} (|\nabla v|^{2}+m^{2} v^{2}) dx dy.
\end{align*}
\indent Since $\mathbb{X}^{s}_{T}$ is a Hilbert space, we can infer that
$$
\|v_{j}-v\|_{\mathbb{X}^{s}_{T}}^{2}=\|v_{j}\|_{\mathbb{X}^{s}_{T}}^{2}+\|v\|_{\mathbb{X}^{s}_{T}}^{2}-2\langle v_{j}, v \rangle_{\mathbb{X}^{s}_{T}} \rightarrow 0,
$$
as $j \rightarrow +\infty$.
\end{proof}

\indent
Now, we are in the position to apply Theorem \ref{PucciSerrinRicceri}. In order to obtain a multiplicity result, we aim to prove that
\begin{equation}\label{piripi}
\mu> \chi(\varrho):=\displaystyle\inf_{u\in \mathbb{B}_\varrho}\frac{\displaystyle \kappa_{s} \sup_{v\in \mathbb{B}_\varrho} \int_{\partial^{0} \mathcal{S}_{T}} F(x,\textup{Tr}(v))dx-\kappa_{s}\int_{\partial^{0} \mathcal{S}_{T}} F(x,\textup{Tr}(u))dx}{\varrho-\|u\|_{e}^{2}}
\end{equation}
for every $\varrho> 0$, where $\mathbb{B}_\varrho:=\{v\in \mathbb{X}^{s}_{T}: \|v\|_{e}<\sqrt{\rho}\}$.
\smallskip

\indent
Fix $\varrho>0$. Since $0\in \mathbb{B}_\varrho$, it follows that
\begin{equation}\label{Funzionale1}
\chi(\varrho)\leq \frac{\displaystyle \kappa_{s} \sup_{v\in \mathbb{B}_\varrho} \int_{\partial^{0} \mathcal{S}_{T}} F(x,\textup{Tr}(v))dx}{\varrho} \, .
\end{equation}
By using $(f_2)$ we can see that
\begin{equation*}
\int_{\partial^{0} \mathcal{S}_{T}} F(x,\textup{Tr}(v))dx \leq a_1 |\textup{Tr}(v)|_{L^1(0, T)^{N}}+\frac{a_2}{q} |\textup{Tr}(v)|^{q}_{L^{q}(0, T)^{N}}
\end{equation*}
for every $v\in \mathbb{X}^{s}_{T}$, so, by (\ref{equivalent}), we deduce that
$$
\sup_{v\in \mathbb{B}_\varrho}\int_{\partial^{0} \mathcal{S}_{T}} F(x,\textup{Tr}(v))dx\leq \displaystyle{{\sigma_1}\sqrt{\varrho}\frac{a_{1}}{\sqrt{1-\displaystyle\frac{\gamma}{m^{2s}}}}+\frac{\sigma_{q}^{q}a_2}{q \left(1-\displaystyle\frac{\gamma}{m^{2s}}\right)^{q/2}}\varrho^{\frac{q}{2}}}.
$$
\indent
This implies that
\begin{equation}\label{Funzionale2}
\frac{\displaystyle{\sup_{v\in \mathbb{B}_\varrho}\int_{\partial^{0} \mathcal{S}_{T}} F(x,\textup{Tr}(v))dx}}{\varrho}\leq \displaystyle{\frac{\sigma_1}{\sqrt{\varrho}}\frac{a_{1}}{\sqrt{1-\displaystyle\frac{\gamma}{m^{2s}}}}+\frac{\sigma_q^qa_2}{q\left(1-\displaystyle\frac{\gamma}{m^{2s}}\right)^{q/2}}\varrho^{\frac{q}{2}-1}} \,.
\end{equation}

\noindent
Since (\ref{la}) holds, conditions (\ref{Funzionale1}) and (\ref{Funzionale2}) immediately yield
$$
\chi(\varrho)\leq \kappa_{s}\left [\displaystyle{\frac{\sigma_1}{\sqrt{\varrho}}\frac{a_{1}}{\sqrt{1-\displaystyle\frac{\gamma}{m^{2s}}}}+\frac{\sigma_q^qa_2}{q\left(1-\displaystyle\frac{\gamma}{m^{2s}}\right)^{q/2}}\varrho^{\frac{q}{2}-1}} \right]<\frac{1}{2\lambda}=:\mu.
$$
\indent
Thus, inequality (\ref{piripi}) is proved.
Then, in view of Theorem \ref{PucciSerrinRicceri}, problem $(\ref{P})$
admits at least two weak solutions one of which lies in $\mathbb{S}_\varrho$. This completes the proof of Theorem \ref{thm1}.\par


\smallskip
\noindent {\bf Acknowledgements.}
The authors warmly thank the anonymous referee for her/his
useful and nice comments on the paper. The manuscript was realized within the auspices of the
INdAM - GNAMPA Projects 2016 titled: {\it Problemi variazionali su variet\`a Riemanniane e gruppi di Carnot}.

\end{document}